\documentclass[12pt]{amsart}
\usepackage{amsmath}
\usepackage{amssymb}
\usepackage{amsthm}
\usepackage{enumerate}
\usepackage[mathscr]{eucal}
\usepackage{color}
\theoremstyle{plain}
\newtheorem{theorem}{Theorem}[section]
\newtheorem{prop}[theorem]{Proposition}

\newtheorem{lemma}{Lemma}[section]

\theoremstyle{definition}

\newtheorem{remark}{Remark}[section]


\usepackage[pagewise]{lineno}
\begin{document}
\title[On extreme contractions and the norm attainment set]{On extreme contractions and the norm attainment set of a bounded linear operator}
\author[Debmalya Sain]{Debmalya Sain}

\newcommand{\acr}{\newline\indent}

\address{\llap{\,}Department of Mathematics\acr
                              Indian Institute of Science\acr
															Bengaluru\acr
															Karnataka 560012\acr
                              INDIA}
\email{saindebmalya@gmail.com}

\thanks{The research of the author is sponsored by Dr. D. S. Kothari Postdoctoral Fellowship. The idea of this work germinated from a discussion with Prof. Gadadhar Misra, to whom the author is extremely indebted. The author feels elated to lovingly acknowledge the immense contribution of his aunt Ms. Sipra Das, towards the welfare of the family.} 

\subjclass[2010]{Primary: 46B20, 46B28; Secondary: 46C15}
\keywords{Extreme contractions; Operator norm attainment; Isometry; Characterization of Hilbert space}

\begin{abstract}
In this paper we completely characterize the norm attainment set of a bounded linear operator on a Hilbert space. This partially answers a question raised recently in [\textit{D. Sain, On the norm attainment set of a bounded linear operator, accepted in Journal of Mathematical Analysis and Applications}]. We further study the extreme contractions on various types of finite-dimensional Banach spaces, namely Euclidean spaces, and strictly convex spaces. In particular, we give an elementary alternative proof of the well-known characterization of extreme contractions on a Hilbert space, that works equally well for both the real and the complex case. As an application of our exploration, we prove that it is possible to characterize real Hilbert spaces among real Banach spaces, in terms of extreme contractions on two-dimensional subspaces of it.

\end{abstract}

\maketitle

\section{Introduction.} 

The purpose of the present paper is to explore the norm attainment set of a bounded linear operator on a Hilbert space. We also study the extreme contractions on a finite-dimensional Banach space, from the point of view of operator norm attainment. Let us first establish the relevant notations and terminologies.\\
Let $\mathbb{X}, \mathbb{Y}$ denote Banach spaces defined over $ \mathbb{K}, $ the field of scalars. In this paper, unless otherwise mentioned, $ \mathbb{K} $ can be either the field of real numbers $ \mathbb{R}, $ or the field of complex numbers $ \mathbb{C}. $ Let $ B_{\mathbb{X}} $ and $ S_{\mathbb{X}} $ denote the unit ball and the unit sphere of $ \mathbb{X} $ respectively, i.e.,  $ B_\mathbb{X}=\{x \in \mathbb{X} : \|x\| \leq 1\} $ and $ S_\mathbb{X}=\{x \in \mathbb{X} : \|x\|=1\}. $ We reserve the symbol $ \mathbb{H} $ for Hilbert spaces. It is rather obvious that $ \mathbb{H} $ is also a Banach space, with respect to the usual norm induced by the inner product $ <,> $ on $ \mathbb{H}. $  For any two elements $x$ and $y$ in $\mathbb{X}$, $x$ is said to be orthogonal to $y$ in the sense of Birkhoff-James \cite{B}, written as $ x \perp_{B} y, $ if $\|x + \lambda y \| \geq \|x\|$ for all scalars $\lambda.$  If the norm on $ \mathbb{X} $ is induced by an inner product then Birkhoff-James orthogonality coincides with the usual inner product orthogonality, i.e., $ x \perp_{B} y $ if and only if $ <x,y> = 0. $ \\
Let $ \mathbb{L}(\mathbb{X}, \mathbb{Y}) $ denote the Banach algebra of all bounded linear operators from $ \mathbb{X} $ to $ \mathbb{Y}. $ We write $ \mathbb{L}(\mathbb{X}, \mathbb{Y}) = \mathbb{L}(\mathbb{X}), $ if $ \mathbb{X} = \mathbb{Y}. $ For $ T \in \mathbb{L}(\mathbb{X}, \mathbb{Y}), $ let $ M_T $ denote the set of unit vectors at which $ T $ attains norm, i.e., 
\[M_T = \{ x \in S_\mathbb{X} : \| Tx \| = \| T \| \}.\]

In this paper, in the context of a Hilbert space $ \mathbb{H}, $ we completely characterize $ M_T, $ for any $ T \in \mathbb{L}(\mathbb{H}). $ This answers a question raised very recently in \cite{Sb}, for the special case of Hilbert spaces. \\
It is easy to observe that for a non-zero $ T \in \mathbb{L}(\mathbb{X}, \mathbb{Y}), $ $ T $ is a scalar multiple of an isometry if and only if $ M_T = S_{\mathbb{X}}. $  It follows from the works of Koldobsky \cite{Kb} and Blanco and Turn$\breve{s}$ek \cite{Bb} that a non-zero $ T \in \mathbb{L}(\mathbb{X}, \mathbb{Y}) $ is a scalar multiple of an isometry if and only if $ T $ preseves Birkhoff-James orthogonality. In particular, it follows that a non-zero scalar multiple of an isometry always takes an orthogonal basis to an orthogonal basis (in the sense of Birkhof-James). In this paper, we prove an analogous result for any bounded linear operator defined on a finite-dimensional Hilbert space. Indeed, we prove that if $ \mathbb{H} $ is a finite-dimensional Hilbert space, then given any $ T \in \mathbb{L}(\mathbb{H}), $ there exists an orthonormal basis $ \mathbb{S} $ of $ \mathbb{H} $ such that $ T $ preserves orthogonality on $ \mathbb{S}. $

We also explore extreme contractions on various types of finite-dimensional Banach spaces, namely Euclidean spaces and strictly convex spaces. $ T \in  \mathbb{L}(\mathbb{X}, \mathbb{Y}) $ is said to be a contraction if $ \| T \| \leq 1. $ If, in addition, $ T $ is also an extreme point of the unit ball of $  \mathbb{L}(\mathbb{X}, \mathbb{Y}) $ then $ T $ is said to be an extreme contraction between $ \mathbb{X} $ and $ \mathbb{Y}. $ Extreme contractions of $ \mathbb{L}(\mathbb{H}), $ where $ \mathbb{H} $ is a complex Hilbert space, have been completely characterized by Kadison in \cite{K} as isometries or coisometries. Interestingly enough, for real Hilbert spaces, the same characterization of extreme contractions was obtained much later on by Grza$\acute{s}$lewicz in \cite{G}. In this work, we give an elementary alternative proof of the same, for the case of finite-dimensional Hilbert spaces. We would like to emphasis that while our proof is essentially finite-dimensional, it works all the same for both real and complex Hilbert spaces. \\
A Banach space $ \mathbb{X} $ is said to be strictly convex if for any two elements $ x, y \in \mathbb{X},$ $ \| x+y \| = \| x \| + \| y \| $ implies that $ y=kx, $ for some $ k \geq 0. $ Equivalently, $ \mathbb{X} $ is strictly convex if and only if every point of $ S_{\mathbb{X}} $ is an extreme point of the unit ball $ B_{\mathbb{X}}. $ 
We study extreme contractions on finite-dimensional strictly convex Banach spaces. We prove that if $ \mathbb{X} $ is an $ n- $dimensional  Banach space and $ \mathbb{Y} $ is any strictly convex Banach space, then $ T \in \mathbb{L}(\mathbb{X}, \mathbb{Y}) $ is an extreme contraction if $ \| T \|=1 $ and $ M_T $ contains $ n $ linearly independent vectors. As an application of this result, we prove that it is possible to characterize real Hilbert spaces among real Banach spaces, in terms of extreme contractions on two-dimensional subspaces of it. 

\section{Main results.}
Let us begin with a complete characterization of the norm attainment set of a bounded linear operator between Hilbert spaces. This answers a question raised in \cite{Sb}, for the special case of Hilbert spaces.

\begin{theorem}\label{theorem:characterization of M_T}
Let $ \mathbb{H}_1, \mathbb{H}_2 $ be Hilbert spaces and $ T \in \mathbb{L}(\mathbb{H}_1, \mathbb{H}_2). $ Given any $ x \in S_{\mathbb{H}_1}, $ $ x \in M_T $ if and only if the following two conditions are satisfied:\\
(i) $ <x,y> = 0 $ implies that $ <Tx,Ty> = 0, $\\
(ii) $ sup \{ \| Ty \| : ~~ \| y \| = 1,~ <x,y> = 0\} \leq \| Tx \|. $
\end{theorem}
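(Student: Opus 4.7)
The plan is to reduce the problem to a spectral property of the positive self-adjoint operator $T^*T$. The key observation is that $\|T\|^2 = \|T^*T\| = \sup_{\|z\|=1}\langle T^*Tz, z\rangle$, so $x \in M_T$ if and only if $\langle T^*Tx, x\rangle = \|T^*T\|\|x\|^2$.

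For the \emph{only if} direction, suppose $x \in M_T$. Condition (ii) is immediate, since $\|Ty\| \le \|T\| = \|Tx\|$ for every unit vector $y$. For (i), I would first establish the standard fact that if $A$ is a positive self-adjoint bounded operator and $\langle Ax, x\rangle = \|A\|$ for some unit vector $x$, then $Ax = \|A\|x$. The quick way is to note that the positive square root $B = (\|A\|I - A)^{1/2}$ satisfies $\|Bx\|^2 = \|A\| - \langle Ax,x\rangle = 0$, hence $Bx = 0$ and so $(\|A\|I - A)x = 0$. Applying this to $A = T^*T$ gives $T^*Tx = \|T\|^2 x$. Then for any $y$ with $\langle x, y\rangle = 0$,
\[
\langle Tx, Ty\rangle = \langle T^*Tx, y\rangle = \|T\|^2 \langle x, y\rangle = 0,
\]
which is exactly (i).

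For the \emph{if} direction, assume (i) and (ii) hold. Given any $z \in S_{\mathbb{H}_1}$, I would decompose $z = \alpha x + \beta y$ where $y \in S_{\mathbb{H}_1}$ is orthogonal to $x$ (choosing $y$ arbitrarily if $z$ is already a scalar multiple of $x$), with $|\alpha|^2 + |\beta|^2 = 1$. By (i), $\langle Tx, Ty\rangle = 0$, so the Pythagorean identity gives
\[
\|Tz\|^2 = |\alpha|^2 \|Tx\|^2 + |\beta|^2 \|Ty\|^2.
\]
Invoking (ii) to replace $\|Ty\|^2$ by something $\le \|Tx\|^2$, we obtain $\|Tz\|^2 \le (|\alpha|^2 + |\beta|^2)\|Tx\|^2 = \|Tx\|^2$. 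Taking the supremum over $z \in S_{\mathbb{H}_1}$ yields $\|T\| \le \|Tx\|$, and the reverse inequality is automatic; hence $x \in M_T$.

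The only mildly delicate point is the lemma that $\langle Ax, x\rangle$ attaining the norm of a positive self-adjoint operator forces $x$ to be an eigenvector with eigenvalue $\|A\|$; everything else is a routine orthogonal decomposition. Because the argument uses only the Pythagorean identity and the adjoint relation, it is insensitive to the dimension of $\mathbb{H}_1, \mathbb{H}_2$ and treats the real and complex cases uniformly.
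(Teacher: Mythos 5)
Your proof is correct, and it differs from the paper's in one substantive place. The direction ``(i) and (ii) imply $x \in M_T$'' is essentially the paper's own argument: decompose $z = \alpha x + h$ with $h \perp x$, apply the Pythagorean identity to $Tz$ using (i), and bound $\|Th\|$ via (ii). The genuine divergence is in proving that $x \in M_T$ implies (i). The paper gets this by citing an external Banach-space result (Theorem 2.2 of the author's earlier paper \cite{Sb}): in smooth spaces, an operator preserves Birkhoff--James orthogonality at any norm-attaining point, and in a Hilbert space Birkhoff--James orthogonality is inner-product orthogonality. You instead give a self-contained Hilbert-space argument: $x \in M_T$ is equivalent to $\langle T^*Tx, x\rangle = \|T^*T\|$, the positive square root of $\|T^*T\|I - T^*T$ annihilates $x$, hence $T^*Tx = \|T\|^2 x$, and (i) follows from $\langle Tx, Ty\rangle = \langle T^*Tx, y\rangle$. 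Your route is fully elementary and makes the content of (i) transparent --- it says precisely that $x$ lies in the top eigenspace of $T^*T$ --- at the price of using adjoints and positive square roots, which are Hilbert-space-specific tools; the paper's route is shorter on the page but outsources the work to a theorem proved elsewhere, one that fits the paper's broader theme of Birkhoff--James orthogonality. Both arguments handle real and complex scalars and arbitrary dimension uniformly. (The only cosmetic quibble: your phrase ``choosing $y$ arbitrarily if $z$ is a scalar multiple of $x$'' silently assumes $\dim \mathbb{H}_1 \geq 2$; the one-dimensional case is trivial and the paper sidesteps it by allowing $h = 0$.)
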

\begin{proof}
Let us first prove the ``if" part. Let $ x \in M_T. $ It follows from the definition of $ \| T \| $ that $ (ii) $ holds true. We also note that every Hilbert space is smooth. Since Birkhoff-James orthogonality coincides with usual inner product orthogonality in a Hilbert space, it follows from Theorem $ 2.2 $ of \cite{Sb} that $ (i) $ holds true. This completes the proof of the ``if" part. \\
Let us now prove the ``only if" part. Let $ x \in S_{\mathbb{H}_1}  $ be such that $ (i) $ and $ (ii) $ are satisfied. Let $ z \in S_{\mathbb{H}_1} $ be chosen arbitrarily. It is easy to see that $ z $ can be written as $ z=\alpha x + h, $ where $ <x,h> = 0 $ and $ \alpha $ is a  scalar. If $ h=0 $ then $ 1 = \| z \| = | \alpha |.  $ Therefore, $ \| Tz \| = | \alpha | \| Tx \| = \| Tx \|. $ Let $ h \neq 0. $ We have, $ 1 = \| z \|^2 =~ <\alpha x + h,\alpha x + h > ~= |\alpha|^2 + \| h \|^2, $ since $ <x,h> = 0. $ Now, by virtue of $ (i) $ and $ (ii), $ we have,\\
$ \|Tz\|^2 =~ <\alpha Tx + Th,\alpha Tx + Th > =~ |\alpha|^2 \|Tx\|^2 + \|Th\|^2 =~ |\alpha|^2 \|Tx\|^2 + \|h\|^2 \|T(\frac{h}{\|h\|})\|^2 \leq |\alpha|^2 \|Tx\|^2 + \|h\|^2 \|Tx\|^2 = \|Tx\|^2.  $ This proves that given any $ z \in S_{\mathbb{H}_1}, $ $ \| Tz \| \leq \| Tx \|. $ In other words, it must be true that $ x \in M_T. $ This completes the proof of the ``only if" part and establishes the theorem.
\end{proof}

\begin{remark}

In view of Theorem $ 2.1, $ we would like to remark that for a bounded linear operator $ T $ between general Banach spaces $ \mathbb{X}, \mathbb{Y} $, obtaining a complete characterization of $ M_T $ seems to be much more difficult. To the best of our knowledge, this problem remains open.

\end{remark}

Next, we prove that if $ \mathbb{H} $ is a finite-dimensional Hilbert space then given any $ T \in \mathbb{L}(\mathbb{H}), $ there exists an orthonormal basis $ \mathbb{S} $ of $ \mathbb{H} $ such that $ T $ preserves orthogonality on $ \mathbb{S}. $

\begin{theorem}\label{theorem:orthogonality preserve}
Let $ \mathbb{H} $ be a finite-dimensional Hilbert space and $ T \in \mathbb{L}(\mathbb{H}). $ Then there exists an orthonormal basis $ \mathbb{S} = \{x_1, x_2, \ldots, x_n\} $ of $ \mathbb{H} $  such that $ T $ preserves orthogonality on $ \mathbb{S}, $ i.e., $ <Tx_i, Tx_j> = 0, $ whenever $ i \neq j. $
\end{theorem}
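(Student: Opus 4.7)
My proof plan is to proceed by induction on $n=\dim \mathbb{H}$, with Theorem \ref{theorem:characterization of M_T} as the principal ingredient. The base case $n=1$ is immediate: any unit vector is an orthonormal basis and the orthogonality condition is vacuous.

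To make the induction run cleanly I would first upgrade the statement to the following form: for any two finite-dimensional Hilbert spaces $\mathbb{H}_1,\mathbb{H}_2$ and any $S\in\mathbb{L}(\mathbb{H}_1,\mathbb{H}_2)$, there exists an orthonormal basis of $\mathbb{H}_1$ on which $S$ preserves orthogonality. This very mild generalization is essentially free in the proof, but it is needed because the natural restriction used in the inductive step has a codomain different from its domain, so the literal statement about $T\in\mathbb{L}(\mathbb{H})$ is not closed under that restriction.

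For the inductive step, since $\mathbb{H}_1$ is finite-dimensional the unit sphere $S_{\mathbb{H}_1}$ is compact and $x\mapsto \|Sx\|$ is continuous, so $M_S$ is non-empty; pick any $x_1\in M_S$. By condition $(i)$ of Theorem \ref{theorem:characterization of M_T}, every $y\in \mathbb{H}_1$ with $\langle x_1,y\rangle=0$ satisfies $\langle Sx_1,Sy\rangle=0$. Now set $W=\{x_1\}^{\perp}$, an $(n-1)$-dimensional Hilbert subspace of $\mathbb{H}_1$, and consider $S|_W\in \mathbb{L}(W,\mathbb{H}_2)$. By the inductive hypothesis there is an orthonormal basis $\{x_2,\ldots,x_n\}$ of $W$ on which $S|_W$ preserves orthogonality. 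Combining, $\{x_1,x_2,\ldots,x_n\}$ is an orthonormal basis of $\mathbb{H}_1$: pairwise orthogonality of $Sx_2,\ldots,Sx_n$ comes from the inductive hypothesis, while orthogonality of $Sx_1$ with each $Sx_i$ for $i\geq 2$ was secured by Theorem \ref{theorem:characterization of M_T} since $x_i\in W=\{x_1\}^{\perp}$.

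I do not expect a genuine obstacle here; the real work has already been done by Theorem \ref{theorem:characterization of M_T}, which forces $S$ to map the orthogonal complement of any norm-attaining direction into the orthogonal complement of its image, and this is precisely what enables the induction. The only point requiring mild care is the generalization of the inductive hypothesis to operators between possibly different Hilbert spaces, as indicated above.
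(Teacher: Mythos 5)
Your proof is correct and follows essentially the same route as the paper: the paper also picks a norm-attaining unit vector, restricts $T$ to its orthogonal complement, and iterates this process down to dimension one, invoking the implication $x\in M_T \Rightarrow (i)$ of Theorem \ref{theorem:characterization of M_T} at each stage. Your only refinements---casting the iteration as a formal induction and enlarging the statement to operators in $\mathbb{L}(\mathbb{H}_1,\mathbb{H}_2)$ so that the restriction to $\{x_1\}^{\perp}$ stays within the scope of the inductive hypothesis---are cosmetic, though they do tidy up a bookkeeping point the paper glosses over.
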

\begin{proof}
Since $ \mathbb{H} $ is finite-dimensional, $ S_{\mathbb{H}} $ is compact. Therefore, there exists $ x \in S_{\mathbb{H}} $ such that $ \| Tx \| = \| T \|. $ Let us choose $ x_1 = x. $ Consider the subspace $ \mathbb{H}_1 $ of $ \mathbb{H} $ given by $ \mathbb{H}_1 = \{ y \in \mathbb{H} ~:~ <x,y>=0 \}. $ Clearly, $ \mathbb{H}_1 $ is also a finite-dimensional Hilbert space. Let $ T_1 $ denote the restriction of $ T $ to $ \mathbb{H}_1. $ Once again, by applying the standard compactness argument, it is easy to see that there exists $ y \in S_{\mathbb{H}_1} $ such that $ \| T_1y \| = \| Ty \| \geq \| Tz \| $ for all $ z \in S_{\mathbb{H}_1}. $ Let us choose $ x_2 = y. $ We continue the process until we reach a one-dimensional subspace of $ \mathbb{H}. $ This gives us a basis $ \mathbb{S} = \{x_1, x_2, \ldots, x_n\} $ of $ \mathbb{H}. $ It is obvious from our construction that $ \mathbb{S} $ is an orthonormal basis of $ \mathbb{H}. $ Moreover, since orthogonality relation is symmetric in a Hilbert space, repeated applications of Theorem $ 2.1 $ shows that $ T $ preserves orthogonality on $ \mathbb{S}, $ i.e., $ <Tx_i, Tx_j> = 0, $ whenever $ i \neq j. $ This completes the proof of the theorem.
\end{proof}

\begin{remark}

Although $ T $ preserves orthogonality on the orthonormal basis $ \mathbb{S} $ of $ \mathbb{H}, $ it is not necessarily true that $ T(\mathbb{S}) $ is an orthonormal basis of $ \mathbb{H}. $ Indeed, $ T(\mathbb{S}) $ may not be a basis of $ \mathbb{H} $ at all. However, if $ T $ is invertible then $ T(\mathbb{S}) $ is also an orthogonal basis of $ \mathbb{H}. $ If, in addition, $ T $ is an isometry then certainly $ T(\mathbb{S}) $ is an orthonormal basis of $ \mathbb{H}. $

\end{remark}

Our next goal is to study extreme contractions on Banach spaces and Hilbert spaces. Let us begin with an easy but useful proposition.\\

\begin{prop}

Let $ \mathbb{X} $ be an $ n- $dimensional Banach space and $ \mathbb{Y} $ be any Banach space. Let $ T \in \mathbb{L}(\mathbb{X}, \mathbb{Y}) $ be such that $ \| T \| = 1, $  $ T $ attains norm at $ n $ linearly independent unit vectors $ x_1, x_2, \ldots, x_n $ and each $ Tx_i $ is an extreme point of $ B_{\mathbb{Y}}. $ Then $ T $ is an extreme contraction in $ \mathbb{L}(\mathbb{X}, \mathbb{Y}). $

\end{prop}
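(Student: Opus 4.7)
The plan is to prove this directly from the definition of an extreme point of the unit ball of $\mathbb{L}(\mathbb{X},\mathbb{Y})$. Suppose $T = \tfrac{1}{2}(A+B)$ where $A, B \in \mathbb{L}(\mathbb{X},\mathbb{Y})$ are contractions. I would aim to show that $A = B = T$, by first proving $A$ and $B$ agree with $T$ on the basis $\{x_1,\dots,x_n\}$, and then invoking linearity.

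The key observation is the following: for each $i$, since $Tx_i$ is an extreme point of $B_{\mathbb{Y}}$ we have $\|Tx_i\| = 1$, and
\[
Tx_i \;=\; \tfrac{1}{2}\bigl(Ax_i + Bx_i\bigr),
\]
with $\|Ax_i\|, \|Bx_i\| \le \|A\|, \|B\| \le 1$. The chain
\[
1 = \|Tx_i\| \le \tfrac{1}{2}\bigl(\|Ax_i\| + \|Bx_i\|\bigr) \le 1
\]
forces $\|Ax_i\| = \|Bx_i\| = 1$, so $Tx_i$ is written as a midpoint of two unit vectors in $B_{\mathbb{Y}}$. Since $Tx_i$ is extreme, this forces $Ax_i = Bx_i = Tx_i$.

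This equality holds for every $i \in \{1, 2, \ldots, n\}$. Because $\{x_1, \ldots, x_n\}$ is a linearly independent set in an $n$-dimensional space, it is a basis of $\mathbb{X}$, and linearity of $A, B, T$ gives $A = B = T$ on all of $\mathbb{X}$. Hence $T$ is an extreme contraction.

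I do not anticipate any serious obstacle here: the hypotheses are essentially tailored to this one-line extremality argument. The only points requiring care are (a) noting that $\|T\| = 1$ combined with $x_i \in M_T$ gives $\|Tx_i\| = 1$, which is automatic from the assumption that $Tx_i$ is extreme, and (b) justifying that the forced equality $Ax_i = Bx_i = Tx_i$ follows from the extremality of $Tx_i$ in $B_{\mathbb{Y}}$ rather than only from the norm equality. The passage from agreement on a basis to global agreement is then just linear algebra.
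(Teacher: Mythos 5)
Your proposal is correct and follows essentially the same argument as the paper: write $T$ as a convex combination of two contractions, use extremality of each $Tx_i$ in $B_{\mathbb{Y}}$ to force agreement at each $x_i$, and conclude by linearity on the basis. The intermediate norm computation is harmless but unnecessary, as you yourself note, since extremality of $Tx_i$ already yields $Ax_i=Bx_i=Tx_i$ directly.
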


\begin{proof}
If possible, suppose that $ T $ is not an extreme contraction. Then there exists $ T_1, T_2 \in \mathbb{L}(\mathbb{X}, \mathbb{Y}) $ such that $ T_1, T_2 \neq T, $ $ \| T_1 \| = \| T_2 \| = 1 $  and $ T = tT_1 + (1-t)T_2,  $ for some $ t \in (0, 1).  $ Therefore, for each $ i \in \{1, 2, \ldots, n\}, $ we have, $ Tx_i=tT_1x_i+(1-t)T_2x_i. $ We also note that $ T_1x_i, T_2x_i \in B_{\mathbb{Y}}, $ as $ \|T_1\|=\| T_2 \|=1. $ Since $ Tx_i $ is an extreme point of $ B_{\mathbb{Y}}, $ it follows that $ T_1x_i=T_2x_i=Tx_i $ for each $ i \in \{1, 2, \ldots, n\}. $ However, this implies that $ T_1, T_2 $ agree with $ T $ on a basis of $ \mathbb{X} $ and therefore, $ T_1=T_2=T. $ This contradicts our initial assumption that $ T_1, T_2 \neq T $ and completes the proof of the proposition.
\end{proof}
Since in a strictly convex space, every point of the unit sphere is an extreme point of the unit ball, the proof of the following proposition is now immediate:

\begin{prop}

Let $ \mathbb{X} $ be an $ n- $dimensional Banach space and $ \mathbb{Y} $ be any strictly convex Banach space. Let $ T \in \mathbb{L}(\mathbb{X}, \mathbb{Y}) $ be such that $ \| T \| = 1 $ and $ T $ attains norm at $ n $ linearly independent unit vectors $ x_1, x_2, \ldots, x_n. $ Then $ T $ is an extreme contraction in $ \mathbb{L}(\mathbb{X}, \mathbb{Y}). $

\end{prop}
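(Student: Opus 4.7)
The plan is straightforward: this statement is essentially an immediate corollary of the preceding proposition, and the only task is to verify that the extra hypothesis appearing there (extremality of each $Tx_i$) comes for free under the strict convexity assumption on $\mathbb{Y}$. So first I would observe that since $\|T\|=1$ and each $x_i \in M_T \cap S_{\mathbb{X}}$, we have $\|Tx_i\|=\|T\|=1$, i.e., $Tx_i \in S_{\mathbb{Y}}$ for every $i \in \{1,2,\ldots,n\}$.

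Next I would invoke the defining property of strict convexity recalled in the introduction: a Banach space $\mathbb{Y}$ is strictly convex if and only if every point of $S_{\mathbb{Y}}$ is an extreme point of $B_{\mathbb{Y}}$. Applied to each of the unit vectors $Tx_1, Tx_2, \ldots, Tx_n$, this at once yields that each $Tx_i$ is an extreme point of $B_{\mathbb{Y}}$.

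Having verified this, every hypothesis of the previous proposition is met: $\mathbb{X}$ is $n$-dimensional, $\|T\|=1$, $T$ attains its norm at the $n$ linearly independent unit vectors $x_1,\ldots,x_n$, and each $Tx_i$ is extreme in $B_{\mathbb{Y}}$. Applying that proposition directly gives that $T$ is an extreme contraction in $\mathbb{L}(\mathbb{X},\mathbb{Y})$, which completes the argument.

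There is no genuine obstacle here; the role of the strict convexity assumption is precisely to make the extremality condition on the images $Tx_i$ automatic, so that the general criterion from the preceding proposition can be used without any additional verification. The substance of the result lies not in the proof but in isolating this natural class of codomains for which a purely quantitative condition on $M_T$ (containing a spanning set of the right size) suffices to force $T$ to be an extreme contraction.
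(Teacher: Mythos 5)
Your proposal is correct and matches the paper exactly: the paper also derives this proposition as an immediate consequence of the preceding one, noting that in a strictly convex space every point of the unit sphere is an extreme point of the unit ball, so each $Tx_i$ (being a unit vector since $x_i \in M_T$ and $\|T\|=1$) is automatically extreme.
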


If $ \mathbb{H} $ is a Hilbert space then the extreme contractions of $ \mathbb{L}(\mathbb{H}) $ are precisely isometries and coisometries. We invite the reader to look through \cite{K} for the complex case and  \cite{G} for the real case. Here we give an alternate elementary proof of the same result, when the Hilbert space is finite-dimensional. We would like to remark that our proof, besides being elementary, remains valid for both real and complex cases. In order to prove the desired result, we require the following fact from \cite{Sb}(see \cite{Sc} for the complex case):\\

\begin{theorem}\label{theorem:orthogonality preserved norm attainment}
Let $ \mathbb{X}, \mathbb{Y} $ be smooth Banach spaces and $ T \in \mathbb{L}(\mathbb{X}, \mathbb{Y}). $ If $ x \in M_T $ then $ T $ preserves orthogonality at $ x, $ i.e., $ x \perp_B y  $ if and only if $ Tx \perp_B Ty. $
\end{theorem}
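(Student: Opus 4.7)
The plan is to exploit the standard identification of Birkhoff--James orthogonality with support functionals in smooth Banach spaces. Recall that when $\mathbb{X}$ is smooth, each nonzero $x \in \mathbb{X}$ admits a unique functional $f_x \in S_{\mathbb{X}^*}$ with $f_x(x) = \|x\|$, and one has $x \perp_B y$ if and only if $f_x(y) = 0$; the analogous statement holds in $\mathbb{Y}$ at $Tx$ provided $Tx \neq 0$. The case $T = 0$ needs no argument, so I assume $\|T\| > 0$, which together with $x \in M_T$ gives $\|Tx\| = \|T\| > 0$.

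The heart of the argument is to identify, up to a scalar, the support functional $f_x$ at $x$ with the pull-back of the support functional at $Tx$. Let $g \in S_{\mathbb{Y}^*}$ be the unique support functional at the unit vector $Tx / \|Tx\|$, so that $g(Tx) = \|Tx\|$. Set $\phi := g \circ T \in \mathbb{X}^*$. Then $\phi(x) = g(Tx) = \|Tx\| = \|T\|$, while the estimate $|\phi(z)| \leq \|g\|\,\|Tz\| \leq \|T\|\,\|z\|$ forces $\|\phi\| = \|T\| = \|Tx\|$. Hence $\phi / \|Tx\|$ is a unit-norm functional on $\mathbb{X}$ attaining its norm at $x$, and smoothness of $\mathbb{X}$ pins it down as $f_x$. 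In particular, $\phi = \|Tx\|\, f_x$.

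From this identity the conclusion is immediate: for every $y \in \mathbb{X}$,
\[
f_x(y) = 0 \iff \phi(y) = 0 \iff g(Ty) = 0,
\]
which, by the support-functional characterizations on each side, is precisely $x \perp_B y \iff Tx \perp_B Ty$.

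The only delicate point, and hence what I would call the main obstacle, is the joint use of smoothness at both ends: one needs $Tx \neq 0$ in order to invoke uniqueness of the support functional in $\mathbb{Y}$ at $Tx$, and one needs the pulled-back functional $\phi$ to have norm exactly $\|Tx\|$ so that smoothness of $\mathbb{X}$ determines it. Both requirements follow cleanly from $x \in M_T$ and $T \neq 0$, so the argument reduces to the bookkeeping of these normalizations.
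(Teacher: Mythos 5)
Your argument is correct. Note that the paper itself offers no proof of this statement: it is imported verbatim as a known fact from the reference \cite{Sb} (with \cite{Sc} for the complex case), so there is no in-paper argument to compare against. Your support-functional proof is the standard and clean route: James's characterization gives $x \perp_B y$ iff some norming functional at $x$ kills $y$, smoothness makes that functional unique, and the computation $\phi(x) = \|Tx\| = \|T\| \geq \|\phi\|$ correctly identifies the pull-back $\phi = g \circ T$ with $\|Tx\|\, f_x$; the equivalence then falls out on both sides. Two small remarks. First, your dismissal of $T = 0$ deserves one more word: for $T = 0$ the ``if and only if'' in the statement is actually \emph{false} (every $Ty = 0$ is Birkhoff--James orthogonal to $Tx = 0$, while $x \not\perp_B y$ in general), so the theorem implicitly assumes $T \neq 0$; saying the case ``needs no argument'' glosses over the fact that it must be excluded rather than handled. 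Second, strictly speaking James's theorem only requires smoothness of $\mathbb{X}$ at the point $x$ and of $\mathbb{Y}$ at $Tx$, so your proof in fact yields a slightly more local statement than the one quoted; this costs nothing and is worth being aware of.
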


Now, the promised characterization of extreme contractions on finite-dimensional Hilbert spaces:

\begin{theorem}\label{theorem:extreme contractions on Euclidean spaces}
Let $ \mathbb{H} $ be a finite-dimensional Hilbert space. $ T \in \mathbb{L}(\mathbb{H}) $ is an extreme contraction if and only if $ T $ is an isometry.
\end{theorem}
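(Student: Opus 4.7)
The plan is to prove both implications by leveraging the structural results built up in the preceding subsections, principally Theorem 2.2 (existence of an orthogonality-preserving orthonormal basis) and Proposition 2.4 (the strict-convexity criterion for extreme contractions).

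The forward direction is essentially immediate. If $T$ is an isometry, then $\|Tv\|=\|v\|$ for every $v\in\mathbb{H}$, so $\|T\|=1$ and $M_T=S_{\mathbb{H}}$; in particular $M_T$ contains an orthonormal basis of $\mathbb{H}$, and hence $n=\dim\mathbb{H}$ linearly independent unit vectors. Since every Hilbert space is strictly convex, Proposition 2.4 applies and yields that $T$ is an extreme contraction.

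For the converse, suppose $T$ is an extreme contraction. First I would record that necessarily $\|T\|=1$: otherwise $T$ lies in the strict interior of the unit ball and admits a non-trivial midpoint decomposition $T=\tfrac{1}{2}(T+\epsilon S)+\tfrac{1}{2}(T-\epsilon S)$ for any nonzero $S$ and any sufficiently small $\epsilon>0$. Then Theorem 2.2 furnishes an orthonormal basis $\{x_1,\ldots,x_n\}$ on which $T$ preserves orthogonality, so that $\{Tx_1,\ldots,Tx_n\}$ is a mutually orthogonal family. Writing $c_i=\|Tx_i\|$, the iterative maximization used in Theorem 2.2 forces the monotonicity $1=c_1\geq c_2\geq\cdots\geq c_n\geq 0$, and a direct Pythagorean expansion $\|Tv\|^2=\sum_i|a_i|^2 c_i^2$ for $v=\sum a_i x_i$ shows that $T$ is an isometry if and only if every $c_i$ equals $1$.

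It then suffices to show that if some $c_k<1$ then $T$ is not extreme; I would do this by producing a nonzero $S\in\mathbb{L}(\mathbb{H})$ with $\|T\pm\epsilon S\|\leq 1$ for a suitable $\epsilon>0$, contradicting extremality via the midpoint decomposition. Split into two cases. If $c_k>0$, define $S$ on the basis by $S(x_k)=Tx_k$ and $S(x_j)=0$ for $j\neq k$; for any unit vector $v=\sum a_i x_i$, orthogonality of $\{Tx_i\}$ gives $\|(T\pm\epsilon S)v\|^2=\sum_{j\neq k}|a_j|^2 c_j^2+|a_k|^2(1\pm\epsilon)^2 c_k^2$, which is bounded by $\sum_i|a_i|^2=1$ as soon as $(1+\epsilon)c_k\leq 1$, and such $\epsilon>0$ exists because $c_k<1$. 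If instead $c_k=0$, the orthogonal family $\{Tx_j\}$ contains a zero vector and so spans at most an $(n-1)$-dimensional subspace of $\mathbb{H}$; pick a unit vector $y$ orthogonal to every $Tx_j$, set $S(x_k)=y$ and $S(x_j)=0$ otherwise, and Pythagoras gives $\|(T\pm\epsilon S)v\|^2=\|Tv\|^2+\epsilon^2|a_k|^2\leq 1$ for $\epsilon\leq 1$. The main obstacle is precisely this latter case $c_k=0$: one cannot perturb along $Tx_k$ itself, and the argument must instead exploit the failure of surjectivity (automatic in finite dimensions once $T$ has a nontrivial kernel) to produce a direction outside the range of $T$ along which the perturbation is safe.
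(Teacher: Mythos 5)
Your proof is correct and follows essentially the same route as the paper: Proposition 2.4 handles the isometry-implies-extreme direction, and for the converse you build the orthogonality-preserving orthonormal basis of Theorem 2.2 and perturb $T$ along a basis direction where the norm is not attained, splitting into the cases $\|Tx_k\|>0$ and $\|Tx_k\|=0$ exactly as the paper does. The one genuine difference is local but worth noting: where the paper verifies $\|T_1\|=\|T_2\|=1$ in its Case I by a somewhat roundabout argument about where the perturbed operator can attain its norm (via orthogonality preservation at norm-attaining points), you bound $\|(T\pm\epsilon S)v\|^2$ directly by the Pythagorean expansion $\sum_{j\neq k}|a_j|^2c_j^2+|a_k|^2(1\pm\epsilon)^2c_k^2\leq\sum_j|a_j|^2=1$, which is cleaner, and likewise your observation that $T$ is an isometry precisely when all $c_i=1$ replaces the paper's appeal to Theorem $2.2$ of the cited reference.
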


\begin{proof}
Let us first prove the easier ``if" part. Let $ dim~ \mathbb{H} = n. $ Let $  T \in \mathbb{L}(\mathbb{H}) $ be an isometry. It is easy to observe that $ \|T\|=1 $ and $ M_T = S_{\mathbb{H}}. $ Therefore, there exists $ n $ linearly independent unit vectors at which $ T $ attains norm. Since every Hilbert space is strictly convex, it now follows from Proposition $ 2.4 $ that $ T $ is an extreme contraction in $ \mathbb{L}(\mathbb{H}). $ This completes the proof of the ``if" part.\\
Let us now prove the comparatively trickier ``only if" part.\\
Let $ T \in \mathbb{L}(\mathbb{H}) $ be an extreme contraction. It is easy to observe that $ \| T \|=1. $  Since $ \mathbb{H} $ is finite-dimensional, by the standard compactness argument, there exists a unit vector $ x_1 \in M_T. $ Applying Theorem $ 2.2, $ let us construct an orthonormal basis $ \mathbb{S} = \{x_1, x_2, \ldots, x_n\} $ of $ \mathbb{H} $ such that $ T $ preserves orthogonality on $ \mathbb{S}, $ i.e., $ <Tx_i, Tx_j> = 0, $ whenever $ i \neq j. $ Note that it follows from the construction of $ \mathbb{S} $ in the proof of Theorem $ 2.2 $ that $ \|Tx_i\| \geq \|Tx_j\|, $ if $ i < j. $  If $ x_i \in M_T $ for each $ i \in \{1, 2, \ldots, n\}, $ then applying Theorem $ 2.2 $ of \cite{Sa}, it is easy to see that $ M_T = S_{\mathbb{X}}. $ Since $ \|T\|=1, $ it follows that $ T $ is an isometry and we having nothing more to prove. Let us assume that $ x_1, \ldots, x_k \in M_T $ and $ x_{k+1}, \ldots, x_n \notin M_T. $ Let us now consider the following two cases and reach a contradiction in each of the cases to complete the proof of the theorem.\\
\\
Case I: $ \| Tx_{k+1} \| > 0. $ Let us choose $ \epsilon > 0 $ such that $ (1+\epsilon)^{2} \| Tx_{k+1} \|^2 < 1.  $ We would like to remark that since $ x_{k+1} \notin M_T, $ such a choice of $ \epsilon $ is always possible. Define a linear operator $ T_1 \in \mathbb{L}(\mathbb{H}) $ in the following way:\\
$ T_1x_i = Tx_i $ for each $ i \in \{ 1, \ldots, k \}, $\\
$ T_1x_{k+i} = (1+\epsilon) Tx_{k+i} $ for each $ i \in \{ 1, \ldots, n-k \}. $ \\
Note that $ T_1 \neq T. $ We claim that $ \| T_1 \|=1. $ Let $ z=\sum_{i=1}^{n} \alpha_ix_i \in S_{\mathbb{H}}, $ for some scalars $ \alpha_i. $ We have,$ \sum_{i=1}^{n} |\alpha_i|^{2}=1. $\\
If $ \alpha_{k+1}=\ldots=\alpha_{n}=0 $ then $ \| T_1z \|=\| Tz \|. $ On the other hand, if $ \alpha_{1}=\ldots=\alpha_{k}=0 $ then $ T_1z=\sum_{i=1}^{n-k}\alpha_{k+i}(1+\epsilon)Tx_{k+i}. $ Therefore, $ \|T_1z\|^{2}= \sum_{i=1}^{n-k}(1+\epsilon)^{2}|\alpha_{k+i}|^{2}\|Tx_{k+i}\|^{2} \leq \sum_{i=1}^{n-k}(1+\epsilon)^{2}|\alpha_{k+i}|^{2}\|Tx_{k+1}\|^{2} = (1+\epsilon)^{2}\|Tx_{k+1}\|^{2} < \|T\|^{2}. $ Let us assume that at least one of $ \alpha_{1},\ldots,\alpha_{k} $ (say $ \alpha_1 $) is nonzero and at least one of $ \alpha_{k+1},\ldots,\alpha_{n} $ (say $ \alpha_{k+1} $) is nonzero.\\
Choosing $ w=-\overline{\alpha_{k+1}}~x_1 + \overline{\alpha_1}~x_{k+1}, $ it is easy to see that $ <z,w>=0. $ However, an easy computation reveals that\\
$ <T_1z,T_1w> = -\alpha_1\alpha_{k+1}(1-(1+\epsilon)^{2}\|Tx_{k+1}\|^{2}) \neq 0. $ This proves that $ T_1 $ does not preserve orthogonality at such a $ z $ and therefore $ T_1 $ can not attain norm at such a $ z. $ Since $ T_1 $ must attain norm at some point of $ S_{\mathbb{H}} $ and $ \|T_1x_1\|=\|Tx_1\|=\|T\|=1, $ we conclude that $ \|T_1\|=1. $\\
Let us now define another linear operator $ T_2 \in \mathbb{L}(\mathbb{H}) $ in the following way:\\
$ T_2x_i = Tx_i $ for each $ i \in \{ 1, \ldots, k \}, $\\
$ T_2x_{k+i} = (1-\epsilon) Tx_{k+i} $ for each $ i \in \{ 1, \ldots, n-k \}. $\\
Clearly, $ T_2 \neq T. $ Similar to the case of $ T_1, $ it is easy to prove that $ \|T_2\|=1. $ Therefore, we have proved the following facts:\\
$ (i)~ \|T_1\|=\|T_2\|=\|T\|=1, $ $ (ii)~ T=\frac{1}{2}T_1+\frac{1}{2}T_1, $ $ (iii)~ T_1,~T_2 \neq T. $  However, this contradicts our initial assumption that $ T \in \mathbb{L}(\mathbb{H})$ is an extreme contraction.\\
\\
Case II: $ \| Tx_{k+1} \| = 0. $ It follows that $ \|Tx_{k+i}\|=0 $ for each $ i \in \{1,\ldots,n-k\}. $ We observe that for each $ i \in \{ 1,\ldots,k \}, $ $ (Tx_i)^{\perp}=\{y \in \mathbb{H}~ :~ <Tx_i,y>=0\} $ is a subspace of codimension $ 1 $ in $ \mathbb{H}. $ Therefore, it is easy to deduce that $ {\bigcap}_{i=1}^k (Tx_i)^{\perp} \neq \emptyset.$ Choose a fixed vector $ w \in {\bigcap}_{i=1}^k (Tx_i)^{\perp} \bigcap S_{\mathbb{H}}. $ Define a linear operator $ T_1 \in \mathbb{L}(\mathbb{H}) $ in the following way:\\
$ T_1x_i = Tx_i $ for each $ i \in \{ 1, \ldots, n \} \setminus \{k+1\}, $\\
$ T_1x_{k+1} = \frac{1}{2}w. $\\
Clearly, $ T_1 \neq T.  $ We claim that $ \| T_1 \|=1. $ Let $ z=\sum_{i=1}^{n} \alpha_ix_i \in S_{\mathbb{H}}, $ for some scalars $ \alpha_i. $ We have,$ \sum_{i=1}^{n} |\alpha_i|^{2}=1. $ We also have, $ \| T_1z \|^{2} = \sum_{i=1}^{k} |\alpha_i|^{2}+\frac{1}{4}| \alpha_{k+1} |^2 \leq \sum_{i=1}^{n} |\alpha_i|^{2}=1. $ Since $ \| T_1x_1 \|=\| Tx_1 \|=1, $ we must have $ \| T_1 \|=1. $ Define another linear operator $ T_2 \in \mathbb{L}(\mathbb{H}) $ in the following way:\\
$ T_2x_i = Tx_i $ for each $ i \in \{ 1, \ldots, n \} \setminus \{k+1\}, $\\
$ T_2x_{k+1} = -\frac{1}{2}w. $\\
As in the case of $ T_1, $ it is easy to observe that $ T_2 \neq T $ and $ \| T_2 \|=1. $ Therefore, we have proved the following facts:\\
$ (i)~ \|T_1\|=\|T_2\|=\|T\|=1, $ $ (ii)~ T=\frac{1}{2}T_1+\frac{1}{2}T_1, $ $ (iii)~ T_1,~T_2 \neq T. $  However, this contradicts our initial assumption that $ T \in \mathbb{L}(\mathbb{H})$ is an extreme contraction.\\
This establishes the theorem.
\end{proof}
As an application of the results obtained by us, we now obtain a characterization of real Hilbert spaces among all real Banach spaces, in terms of extreme contractions. First, let us prove the following lemma in order to obtain the desired characterization:
\begin{lemma}
Let $ \mathbb{X} $ be a two-dimensional real Banach space which is not strictly convex. Then there exists a linear operator $ T \in \mathbb{L}(\mathbb{X}) $ such that $ T $ is an extreme contraction in $ \mathbb{L}(\mathbb{X}) $ but $ T $ is not an isometry.
\end{lemma}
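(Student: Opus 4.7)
The plan is to use the flat face on $S_{\mathbb{X}}$ to build a rank-one operator that sends two distinct boundary points to a single extreme point. Unpacking the failure of strict convexity gives $x, y \in \mathbb{X} \setminus \{0\}$ with $\|x+y\| = \|x\| + \|y\|$ and $y$ not a non-negative multiple of $x$; setting $u_1 = x/\|x\|$ and $u_2 = y/\|y\|$, the convexity of the norm forces $\|tu_1 + (1-t)u_2\| = 1$ for every $t \in [0,1]$, so the whole segment $[u_1, u_2]$ lies in $S_{\mathbb{X}}$, with $u_1 \ne u_2$. The vectors $u_1, u_2$ are linearly independent (the only other possibility would be $u_2 = -u_1$, which would place $0 \in S_{\mathbb{X}}$). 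Fix any extreme point $e$ of $B_{\mathbb{X}}$, which exists by Krein--Milman, and define $T \in \mathbb{L}(\mathbb{X})$ on the basis $\{u_1, u_2\}$ by $T(au_1 + bu_2) = (a+b)\,e$. In particular $Tu_1 = Tu_2 = e$, so $T(u_1 - u_2) = 0$ while $u_1 \ne u_2$; hence $T$ is rank one and therefore not an isometry.

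To compute $\|T\|$, I would study the linear functional $\phi(au_1 + bu_2) := a + b$, for which $Tx = \phi(x)\,e$. Applying Hahn--Banach at the midpoint $m := (u_1+u_2)/2 \in \partial B_{\mathbb{X}}$ yields a norm-one supporting functional $\psi$ with $\psi(m) = 1$ and $\psi \le 1$ on $B_{\mathbb{X}}$; then $1 = \psi(m) = \tfrac{1}{2}(\psi(u_1)+\psi(u_2))$ combined with $\psi(u_1), \psi(u_2) \le 1$ forces $\psi(u_1) = \psi(u_2) = 1$. Since $\psi$ and $\phi$ then agree on the basis $\{u_1,u_2\}$, we get $\psi = \phi$, so $\|\phi\| = 1$, and since $\|e\| = 1$ this gives $\|T\| = 1$.

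For extremality, assume $T = \tfrac{1}{2}(T_1 + T_2)$ with $\|T_1\|, \|T_2\| \le 1$. Evaluating at $u_i$ writes the extreme point $e$ as the midpoint of the pair $T_1 u_i, T_2 u_i \in B_{\mathbb{X}}$, which by extremality of $e$ forces $T_1 u_i = T_2 u_i = e$ for $i = 1, 2$; since $\{u_1, u_2\}$ is a basis, $T_1 = T_2 = T$, so $T$ is an extreme contraction. The only conceptual hurdle is the supporting-functional computation that pins down $\|\phi\| = 1$, but this is a routine application of Hahn--Banach at the midpoint of the face, so I do not anticipate any real obstacle.
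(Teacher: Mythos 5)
Your proposal is correct and takes essentially the same route as the paper: both collapse the flat face of $S_{\mathbb{X}}$ onto a single extreme point via a rank-one operator, verify $\|T\|=1$ by exhibiting a norm-one functional supporting the face (the paper phrases this as Birkhoff--James orthogonality of an interior point of the segment to its direction, which is the same fact as your Hahn--Banach step at the midpoint), and deduce extremality from the image of two linearly independent norm-attaining vectors being an extreme point of the ball. The only cosmetic difference is that you reprove the relevant case of the paper's Proposition $2.3$ inline rather than citing it; there are no gaps.
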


\begin{proof}
Since $ \mathbb{X} $ is not strictly convex, the unit sphere $ S_{\mathbb{X}} $ contains a closed straight line segment $ \mathbb{I}. $ We note that $ \mathbb{I} $ can be written as $ \mathbb{I}=\{x+\lambda y : \lambda_{1} \leq \lambda \leq \lambda_{2}\}, $ where $ x $ is a fixed interior point of $ \mathbb{I}, $ $ y $ is a fixed point on $ S_{\mathbb{X}} $ such that the straight line joining $ \theta $ and $ y $ is parallel to $ \mathbb{I} $ and $ \lambda_1, \lambda_2 $ are two fixed real numbers, one positive and the other negative. From the description of $ y, $ it is quite clear that $ x \perp_{B} y. $ Furthermore, we also have, $ v_1=x+\lambda_1 y $ and $ v_2=x+\lambda_2 y $ are extreme points of $ B_{\mathbb{X}}. $ Indeed, $ v_1, v_2 $ are the two end points of $ \mathbb{I} $ and therefore, $ v_1, v_2 $ must be extreme points of $ B_{\mathbb{X}}. $ Let $ w $ be any fixed extreme point of $ B_{\mathbb{X}}. $ Let us define a linear operator $ T \in \mathbb{L}(\mathbb{X}) $ in the following way:
 \[Tx=w,~Ty=0.\]
We claim that $ \|T\|=1. $ Clearly, $ \|T\| \geq \| Tx \|=\|w\|=1. $ On the other hand, let $ z=\alpha x + \beta y \in S_{\mathbb{X}}, $ where $ \alpha, \beta $ are scalars. We have, $ 1=\| \alpha x + \beta y \| \geq |\alpha|, $ since $ x \perp_{B} y. $ Therefore, $ \|Tz\|=\|\alpha w\|=|\alpha| \leq 1. $ This proves that $ \|T\|=1. $ It is now easy to observe that $ v_1, v_2 \in M_{T}. $ We also note that $ v_1, v_2 $ must be linearly independent and $ Tv_1 = Tv_2 = w. $ As $ w $ is an extreme point of $ B_{\mathbb{X}}, $ applying Proposition $ 2.3, $ we obtain that $ T $ is an extreme contraction in $ \mathbb{L}(\mathbb{X}). $ However, $ T $ can not be an isometry, since $ \|y\|=1 <0=\|Ty\|. $ This completes the proof of the lemma.   
\end{proof}

Let us now proceed towards establishing the promised characterization of real Hilbert spaces.
\begin{theorem}\label{theorem:characterization of Hilbert spaces}
A real  Banach space $ \mathbb{X} $ is a Hilbert space if and only if for every two-dimensional subspace $ \mathbb{Y} $ of $ \mathbb{X}, $ isometries are the only extreme contractions in $ \mathbb{L}(\mathbb{Y}). $
\end{theorem}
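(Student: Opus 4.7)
The forward implication I would settle directly via Theorem~\ref{theorem:extreme contractions on Euclidean spaces}: any two-dimensional subspace $\mathbb{Y}$ of a Hilbert space inherits the restricted inner product and is therefore a finite-dimensional Hilbert space, so its extreme contractions coincide with its isometries.

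For the converse I plan to argue contrapositively, first localising the obstruction to two dimensions via the Jordan--von Neumann characterisation. If $\mathbb{X}$ fails to be Hilbert then some pair $x,y \in \mathbb{X}$ violates the parallelogram identity, so the two-dimensional subspace $\mathbb{Y} = \mathrm{span}\{x,y\}$ is itself not a Hilbert space. My task then reduces to manufacturing a non-isometric extreme contraction in $\mathbb{L}(\mathbb{Y})$, which would contradict the standing hypothesis applied to this particular $\mathbb{Y}$.

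I foresee two cases. If $\mathbb{Y}$ is not strictly convex, Lemma $2.1$ supplies the desired operator at once. The substantive case is when $\mathbb{Y}$ is strictly convex but not Hilbert. Here I would first boost the hypothesis by duality: the adjoint map $T \mapsto T^{*}$ is an isometric isomorphism from $\mathbb{L}(\mathbb{Y})$ onto $\mathbb{L}(\mathbb{Y}^{*})$ carrying extreme contractions to extreme contractions and isometries to isometries, so the condition ``extreme contractions equal isometries'' transfers to $\mathbb{L}(\mathbb{Y}^{*})$. Applying Lemma $2.1$ to $\mathbb{Y}^{*}$ then forces $\mathbb{Y}^{*}$ to be strictly convex as well, i.e.\ $\mathbb{Y}$ smooth. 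Invoking the classical characterisation of two-dimensional real Hilbert spaces by the symmetry of Birkhoff--James orthogonality, I can locate unit vectors $u, v \in S_{\mathbb{Y}}$ with $u \perp_{B} v$ but $v \not\perp_{B} u$. Using this asymmetry I would select $v' \in S_{\mathbb{Y}} \setminus \{\pm v\}$ with $\|\alpha u + \beta v'\| \leq \|\alpha u + \beta v\|$ for every pair of scalars $\alpha, \beta$, and define $T \in \mathbb{L}(\mathbb{Y})$ by $T u = u$ and $T v = v'$. By construction $\|T\|=1$ with $\pm u, \pm v \in M_{T}$, so Proposition $2.4$ promotes $T$ to an extreme contraction, while $v' \ne \pm v$ prevents $T$ from being an isometry, producing the desired contradiction.

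I expect the principal obstacle to be this geometric construction of $v'$: one must convert the pointwise asymmetry ``$v \not\perp_{B} u$'' into a uniform norm domination over the entire $(\alpha, \beta)$-plane. I plan to approach it by parameterising $S_{\mathbb{Y}}$ smoothly, tracking the support-functional map, and perturbing $v$ along $S_{\mathbb{Y}}$ in the direction picked out by $f_{v}(u) \ne 0$ while controlling the operator norm via a compactness/continuity argument. Once that step is executed, the remainder is routine: Proposition $2.4$ and Lemma $2.1$ (in its primal and dual forms), together with the Jordan--von Neumann reduction, close the loop back to $\mathbb{X}$.
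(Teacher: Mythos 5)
Your forward implication and your treatment of the non-strictly-convex case coincide with the paper's: Theorem~\ref{theorem:extreme contractions on Euclidean spaces} for the former, Lemma $2.1$ for the latter, and the reduction to a two-dimensional non-Hilbert subspace is the same. The divergence, and the problem, is in the strictly convex case.

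There you invoke ``the classical characterisation of two-dimensional real Hilbert spaces by the symmetry of Birkhoff--James orthogonality'' to produce unit vectors $u \perp_B v$ with $v \not\perp_B u$. No such characterisation exists in dimension two: James' symmetry theorem requires dimension at least three, and in the plane there are Radon norms --- which can be taken smooth and strictly convex and are not induced by any inner product --- for which Birkhoff--James orthogonality \emph{is} symmetric. For such a $\mathbb{Y}$ your construction never starts, because the asymmetric pair $(u,v)$ does not exist; yet this is exactly a space the converse must handle. A second, independent gap is the existence of $v' \in S_{\mathbb{Y}} \setminus \{\pm v\}$ with $\|\alpha u + \beta v'\| \leq \|\alpha u + \beta v\|$ for all scalars $\alpha, \beta$: you flag this as the principal obstacle but give no argument, and it is not clear that the pointwise asymmetry $f_v(u) \neq 0$ can be upgraded to this uniform two-parameter domination. (You would also still need to rule out that the resulting $T$ is accidentally an isometry.) The duality step ($T \mapsto T^*$ preserving extreme contractions and isometries, forcing $\mathbb{Y}$ smooth) is fine in finite dimensions but does not rescue the argument, since Radon planes can be smooth.

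The paper closes the strictly convex case by a different and essentially black-box route: it cites Theorem $2.2$ of \cite{Sa}, which says that on a finite-dimensional real space that is not an inner product space there is an operator $T$ attaining its norm at two linearly independent unit vectors $e_1, e_2$ without attaining it on all of $\mathrm{span}\{e_1,e_2\} \cap S_{\mathbb{Y}}$; normalising $\|T\|=1$, Proposition $2.4$ makes $T$ an extreme contraction, and the failure to attain norm everywhere on $S_{\mathbb{Y}}$ makes it a non-isometry. If you want to repair your proof, replacing the Birkhoff--James symmetry step with this citation (or an equivalent norm-attainment characterisation of inner product spaces valid in dimension two) is the natural fix.
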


\begin{proof}
Let us first prove the ``only if" part. If $ \mathbb{X} $ is a Hilbert space then every two-dimensional subspace $ \mathbb{Y} $ of $ \mathbb{X} $ is also a Hilbert space. Therefore, it follows from Theorem $ 2.6 $ that isometries are the only extreme contractions in $ \mathbb{L}(\mathbb{Y}). $\\
Let us now prove the ``if" part. If possible, suppose that $ \mathbb{X} $ is not a Hilbert space. Then there exists a two-dimensional subspace $ \mathbb{Y} $ of $ \mathbb{X} $ such that $ \mathbb{Y} $ is not a Hilbert space. Let us first assume that $ \mathbb{Y} $ is strictly convex. As $ \mathbb{Y} $ is not a Hilbert space, it follows from Theorem $ 2.2 $ of \cite{Sa} that there exists a linear operator $ T \in \mathbb{L}(\mathbb{Y}) $ and two unit vectors $ e_1, e_2 \in S_{\mathbb{Y}} $ such that $ T $ attains norm at $ e_1, e_2 \in S_{\mathbb{Y}} $ but $ T $ does not attain norm at every point of $ span~\{e_1, e_2\}\bigcap S_{\mathbb{Y}}. $ It is immediate that $ T $ must be non-zero. We also note that $ M_{T}=M_{\frac{T}{\|T\|}}. $ Therefore, without loss of generality, we may and do assume that $ \|T\|=1. $ Since $ T $ does not attain norm at every point of $ span~\{e_1, e_2\}\bigcap S_{\mathbb{Y}}, $ it is easy to deduce that $ e_1, e_2 $ must be linearly independent and $ T $ can not be an isometry.  Since $ \mathbb{Y} $ is strictly convex and $ e_1, e_2 $ are linearly independent, it follows from Proposition $ 2.4 $ that $ T $ is an extreme contraction in $ \mathbb{L}(\mathbb{Y}). $ However, this contradicts our hypothesis as $ T $ is not an isometry.\\
Next, let us assume that $ \mathbb{Y} $ is not strictly convex. Lemma $ 2.1 $ ensures that there exists a linear operator $ T \in \mathbb{L}(\mathbb{Y}) $ such that $ T $ is an extreme contraction in $ \mathbb{L}(\mathbb{Y}) $ but $ T $ is not an isometry. This, once again, is a contradiction to our initial hypothesis. This establishes the theorem completely.

\end{proof}

As a concluding remark, we would like to add that while the characterization of extreme contractions between Hilbert spaces is well-understood, the scenario is far from complete in the more general setting of Banach spaces. In this paper, we have tried to illustrate the pivotal role played by the norm attainment set of a bounded linear operator in studying extreme contractions between Banach spaces. It is expected that the study will be further continued, in order to have a better understanding of extreme contractions and the norm attainment set of a bounded linear operator, in the context of general Banach spaces.





\end{document}